\pdfoutput=1
\documentclass[11pt,a4paper]{amsart}
\usepackage[margin=1in]{geometry}
\usepackage[T1]{fontenc}
\usepackage{textcase}
\usepackage[dvipsnames]{xcolor}
\usepackage{microtype}
\usepackage{fnpct}

\usepackage{amsmath}
\usepackage{amssymb}
\usepackage{eucal}
\usepackage{mathrsfs}
\usepackage{tikz-cd}

\usepackage{enumitem}
\usepackage{booktabs}
\usepackage[pdfusetitle,colorlinks]{hyperref}
\hypersetup{bookmarksdepth=2,pdfencoding=unicode,allcolors=MidnightBlue}

\usepackage{zref-clever}
\zcsetup{abbrev=false,cap=true,nameinlink=false,sort=false,lang=english}
\newcommand{\cref}[1]{\zcref{#1}}
\newcommand{\Cref}[1]{\zcref[S]{#1}}
\zcsetup{pairsep={ and~},lastsep={, and~}}
\zcRefTypeSetup{equation}{Name-sg=,Name-pl=,refbounds={(,,,)}}
\AddToHook{env/equation/begin}{\zcsetup{countertype={equation=equation}}}
\zcRefTypeSetup{item}{Name-sg=,Name-pl=,refbounds={(,,,)}}
\newlist{conenum}{enumerate}{1}
\setlist[conenum,1]{label=(\roman*),ref=\roman*}
\zcRefTypeSetup{conenumi}{Name-sg=,Name-pl=,refbounds={(,,,)}}

\NewDocumentCommand{\newzctheorem}{momo}{\IfValueTF{#4}
  {\newtheorem{#1}{#3}[#4]}
  {\IfValueTF{#2}
    {\AddToHook{env/#1/begin}{\zcsetup{countertype={#2=#1}}}\newtheorem{#1}[#2]{#3}}
    {\newtheorem{#1}{#3}}}}
\numberwithin{equation}{section}
\theoremstyle{plain}
\newtheorem*{Theorem}{Main Theorem}
\newzctheorem{theorem}[equation]{Theorem}
\newzctheorem{proposition}[equation]{Proposition}
\newzctheorem{lemma}[equation]{Lemma}
\newzctheorem{corollary}[equation]{Corollary}
\theoremstyle{definition}
\newzctheorem{definition}[equation]{Definition}
\newzctheorem{example}[equation]{Example}
\theoremstyle{remark}
\newzctheorem{remark}[equation]{Remark}

\let\oldSS\SS\let\SS\relax

\newcommand{\NN}{\mathbf{N}}
\newcommand{\QQ}{\mathbf{Q}}

\newcommand{\E}{\mathrm{E}}

\newcommand{\Sh}{\operatorname{Sh}}
\newcommand{\Cor}{\operatorname{Cor}}

\newcommand{\sgn}{\operatorname{sgn}}
\newcommand{\GL}{\operatorname{GL}}
\newcommand{\st}{\operatorname{st}}

\newcommand{\SS}{\operatorname{S}}
\newcommand{\ST}{\mathscr{T}}

\newcommand{\Lie}[1]{\mathcal{#1}}

\providecommand*\shuffle{\mathbin{\mathpalette\shuffleaux\relax}}
\newcommand*\shuffleaux[2]{\sbox0{$#1\vcenter{}$}\kern.15\ht0
\rlap{\rule{2.5\ht0}{.25\ht0}}\hbox to2.5\ht0{\def\x{\rule[.2\ht0]{.17\ht0}{1.65\ht0}}\x\hfil\x\hfil\x}\kern.15\ht0}

\title[The decomposition relations]{The decomposition relations alone\texorpdfstring{\\}{ }present the Goncharov Lie coalgebra}
\author{Ko Aoki}
\address{Department of Mathematical Sciences,
  University of Copenhagen, Denmark
}
\email{aoki@math.ku.dk}
\date{\today}

\begin{document}

\begin{abstract}
  Recently,
  Kupers–Rudenko–Sierra
  introduced the Goncharov Lie coalgebra.
  They gave a presentation using correlator generators
  and four types of relations:
  homogeneity,
  cyclic symmetry,
  shuffle relations,
  and decomposition relations.
  We show that the first three families of relations
  follow from the decomposition relations.
\end{abstract}

\maketitle
\setcounter{tocdepth}{1}

\section{Introduction}\label{s:intro}

For a field~\(F\),
Kupers–Rudenko–Sierra~\cite{KRS1}
introduced the \emph{Goncharov Lie coalgebra}
in terms of \(\E_{\infty}\)-homology
as
\begin{equation*}
  \Lie{G}(F)
  =\bigoplus_{n\geq1}\Lie{G}_{n}(F)
  =\bigoplus_{n\geq1}H_{n,2n-1}^{\E_{\infty}}(B{\GL_{*}(F)};\QQ).
\end{equation*}
They proved in~\cite{KRS2} that
when \(F\) is a number field,
the finite-dimensional comodule category
over this graded Lie coalgebra
is equivalent to the category of mixed Tate motives over~\(F\).
We study their \emph{correlator} presentation of \(\Lie{G}(F)\)\footnote{They gave a formula for the cobracket in~\cite[Theorem~C.b]{KRS1}.
}
from~\cite[Theorem~C.a]{KRS1}:

\begin{theorem}[Kupers–Rudenko–Sierra]\label{xvi80j}
  For \(n\geq1\),
  the \(\QQ\)-vector space~\(\Lie{G}_{n}(F)\)
  has a presentation
  with generators
  \(\Cor(x_{0},\dotsc,x_{n})\)\footnote{We omit the superscript~\(\Lie{G}\)
    here, since other types of correlators are not used in this paper.
  },
  where \(x_{0}\), \dots, \(x_{n}\in F\) are not all equal,
  and relations as follows:
  \begin{description}
    \item[Homogeneity]
      \(\Cor(x_{0},\dotsc,x_{n})=\Cor(x_{0}+b,\dotsc,x_{n}+b)\)
      for \(b\in F\).
    \item[Cyclic symmetry]
      \(\Cor(x_{0},x_{1},\dotsc,x_{n})=\Cor(x_{1},x_{2},\dotsc,x_{n},x_{0})\).
    \item[Shuffle relations]
      For \(n=p+q\) with~\(p\), \(q>0\),
      \begin{equation*}
        \sum_{\sigma\in\Sh(p,q)}
        \Cor(x_{0},x_{\sigma(1)},\dotsc,x_{\sigma(n)})=0.
      \end{equation*}
    \item[Decomposition relation]
      We recall
      in \cref{s:com} the definition of \(\ST(n)\) and \(\sgn\).
      \begin{equation*}
        \begin{split}
          &\Cor(x_{0},\dotsc,x_{n})-\Cor(y_{0},\dotsc,y_{n}) \\
          &=
          \sum_{T=(i_{1}j_{1},\dotsc,i_{n}j_{n})\in\ST(n)}
          \sgn(T)
          \Cor\left(
            0,
            \frac{x_{i_{1}}-x_{j_{1}}}{y_{i_{1}}-y_{j_{1}}},
            \dotsc,
            \frac{x_{i_{n}}-x_{j_{n}}}{y_{i_{n}}-y_{j_{n}}}
          \right),
        \end{split}
      \end{equation*}
      where a summand is omitted if
      \(y_{i_{k}}=y_{j_{k}}\) for some \(k\).
  \end{description}
\end{theorem}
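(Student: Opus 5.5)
This statement is quoted from \cite[Theorem~C.a]{KRS1}, and in the present paper we take it as input rather than reprove it. If one wanted to reconstruct the argument, the plan would be to identify \(H_{n,2n-1}^{\E_{\infty}}(B\GL_{*}(F);\QQ)\) with an explicit algebraic object, write down a presentation of that object, and then rewrite the presentation in correlator form.

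\textbf{Step 1: reduce \(\E_\infty\)-homology to Steinberg coinvariants.} By the standard theory of \(\E_{\infty}\)-cells for general linear groups, \(\E_{\infty}\)-homology in rank~\(n\) is computed by a splitting complex. Its top nonvanishing degree should be a Steinberg-type module. I would therefore aim to show that \(\Lie{G}_{n}(F)\) is isomorphic to the \(\GL_{n}(F)\)-coinvariants of a suitably twisted Steinberg module, with rational coefficients. The homological degree \(2n-1\) should be exactly the degree in which this top module appears.

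\textbf{Step 2: present the coinvariants.} I would use a Bykovski\u{\i}/Lee--Szczarba-type presentation of the Steinberg module. Its generators are frames, or apartment classes, spanned by vectors attached to the points \(x_{0},\dotsc,x_{n}\). After passing to coinvariants, such a class depends only on the configuration up to affine transformations.
\begin{itemize}
  \item Invariance under translations should give homogeneity.
  \item Permuting the frame vectors should give cyclic symmetry.
  \item Decomposing frames in the Steinberg module should give the shuffle relations.
  \item Changing from one frame to another, parametrised by \(x\) and \(y\), should give the decomposition relation. The spanning trees in \(\ST(n)\) would arise from a matrix-tree style expansion of the change of basis, with the signs \(\sgn(T)\) coming from orientations.
\end{itemize}

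\textbf{Main obstacle.} The hard step is completeness: showing that these four families of relations generate all relations among the correlators. This requires a precise, and highly nontrivial, presentation of the Steinberg coinvariants. That is exactly the content of \cite{KRS1}, so here we simply invoke their theorem.
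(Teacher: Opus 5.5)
Your proposal agrees with the paper: the paper does not prove this theorem but quotes it from Kupers--Rudenko--Sierra \cite[Theorem~C.a]{KRS1} and takes it as input, exactly as you do. Your reconstruction via Steinberg coinvariants is speculative and not needed for that citation; also, the paper's main result shows that homogeneity, cyclic symmetry and the shuffle relations follow from the decomposition relation, so they need no separate sources.
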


In~\cite[Remark~7.2]{KRS1},
Kupers–Rudenko–Sierra noted that homogeneity is redundant
and raised the analogous question for cyclic symmetry.
We answer this affirmatively and show, moreover,
that the shuffle relations are redundant:

\begin{Theorem}\label{xwzp4u}
  For \(n\geq1\),
  the \(\QQ\)-vector space~\(\Lie{G}_{n}(F)\)
  is presented by
  \(\Cor(x_{0},\dotsc,x_{n})\),
  where \(x_{0}\), \dots, \(x_{n}\in F\) are not all equal,
  and the decomposition relation.
\end{Theorem}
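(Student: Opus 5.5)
The plan is to let \(V_{n}\) be the \(\QQ\)-vector space generated by the symbols \(\Cor(x_{0},\dotsc,x_{n})\) subject only to the decomposition relation. By \cref{xvi80j} there is a surjection \(V_{n}\to\Lie{G}_{n}(F)\), so it suffices to show that homogeneity, cyclic symmetry and the shuffle relations already hold in~\(V_{n}\). Write \(D(x,y)\) for the right-hand side of the decomposition relation, so that \(\Cor(x)-\Cor(y)=D(x,y)\) in~\(V_{n}\). Two formal consequences will be used repeatedly. First, taking \(y=x\) gives \(D(x,x)=0\). Second, \(D(x,y)\) depends on \((x,y)\) only through the ratios \((x_{i}-x_{j})/(y_{i}-y_{j})\) and the vanishing pattern of the \(y_{i}-y_{j}\).

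\textbf{Homogeneity.} Take \(y=x+b\). Every ratio \((x_{i}-x_{j})/(y_{i}-y_{j})\) equals~\(1\), and the omitted summands are the same as for \(y=x\). Hence \(D(x,x+b)=D(x,x)=0\), which gives \(\Cor(x)=\Cor(x+b)\). This is the observation in~\cite[Remark~7.2]{KRS1}.

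\textbf{Cyclic symmetry.} Let \(\tau\) be the cyclic shift of indices \(\{0,\dotsc,n\}\). Relabelling vertices by~\(\tau\) gives a bijection \(\ST(n)\to\ST(n)\), and I expect it to change \(\sgn\) by a sign \(\epsilon_{\tau}\) read off from the definitions in \cref{s:com}. Applying this bijection to \(D(\tau x,\tau y)\) should give \(D(\tau x,\tau y)=\epsilon_{\tau}D(x,y)\). I then compare two decomposition relations:
\begin{equation*}
  \Cor(\tau x)-\Cor(\tau y)=\epsilon_{\tau}\bigl(\Cor(x)-\Cor(y)\bigr).
\end{equation*}
If \(\epsilon_{\tau}=1\), this shows that \(c=\Cor(\tau x)-\Cor(x)\) is independent of~\(x\). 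Iterating \(n+1\) times and using \(\tau^{n+1}=\mathrm{id}\) gives \((n+1)c=0\), so \(c=0\) over~\(\QQ\). If instead \(\epsilon_{\tau}=-1\), the displayed relation shows that \(\Cor(\tau x)+\Cor(x)\) is independent of~\(x\). In that case I would fix the constant by evaluating at a configuration that is \(\tau\)-invariant up to translation and scaling; homogeneity, together with the scaling relation \(y=\lambda x\) (all ratios equal \(\lambda^{-1}\)), should control such configurations. The precise sign bookkeeping here is the first place that needs care.

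\textbf{Shuffle relations.} I would fix an auxiliary configuration~\(y\) and sum the decomposition relations \(\Cor(x_{0},x_{\sigma})-\Cor(y_{0},y_{\sigma})=D((x_{0},x_{\sigma}),(y_{0},y_{\sigma}))\) over \(\sigma\in\Sh(p,q)\). On the right-hand side I would group the summands by the underlying unordered pattern of pairs \(\{i_{k},j_{k}\}\). The claim to prove is a purely combinatorial identity: for each fixed tree, the signed sum over shuffles of the relabelled trees vanishes. This is the statement that the signed trees in \(\ST(n)\) behave like Lie elements, and shuffles annihilate Lie elements (Ree's theorem). I would prove it by a sign-reversing involution on pairs \((\sigma,T)\), swapping the relative order of two adjacent letters from different blocks that are not joined by an edge. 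It would then remain to show \(\sum_{\sigma}\Cor(y_{0},y_{\sigma})=0\) for one well-chosen~\(y\). For this I would take \(y\) highly degenerate, say all \(y_{i}\) equal except one, so that most tree terms are omitted and the residual sum can be checked directly, possibly using cyclic symmetry established above.

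I expect the main obstacle to be this shuffle step. It requires matching the sign conventions of \(\ST(n)\) precisely with the shuffle combinatorics, and also handling summands that are omitted because some \(y_{i_{k}}=y_{j_{k}}\), since that omission is not obviously compatible with the involution. If the direct involution fails, I would instead induct on~\(n\), using the cobracket-free structure of \(D\) to reduce to \(p=1\).
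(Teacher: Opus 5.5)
\textbf{There is a genuine gap.} Your homogeneity step is the paper's argument. The other two steps, however, rest on identities that are false at the level you use them.

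\emph{Cyclic symmetry.} Relabelling vertices does \emph{not} preserve $\ST(n)$. For $n=2$ the recurrence gives $\mathrm{S}_2(0,1,2)=02\otimes 12+01\otimes 02-01\otimes 12$, so $\ST(2)=\{(02,12),(01,02),(01,12)\}$. The shift $i\mapsto i+1 \bmod 3$ sends $(01,02)$ to $(12,01)\notin\ST(2)$. Writing $r_{ij}=(x_i-x_j)/(y_i-y_j)$, one gets
\begin{equation*}
  D(x,y)-D(\tau x,\tau y)
  =\bigl[\Cor(0,r_{02},r_{12})+\Cor(0,r_{12},r_{02})\bigr]
  -\bigl[\Cor(0,r_{01},r_{12})+\Cor(0,r_{12},r_{01})\bigr].
\end{equation*}
This is a difference of two weight-$2$ shuffle sums. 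So neither $\epsilon_\tau=1$ nor $\epsilon_\tau=-1$ holds unless you already know the shuffle relations.

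\emph{Shuffle relations.} Your ``purely combinatorial identity'' is false as well. For example, $\mathrm{S}_2(0,1,2)+\mathrm{S}_2(0,2,1)=01\otimes 02+02\otimes 01=01\shuffle 02\neq 0$, so no sign-reversing involution can exist. Summing over shuffles of vertex labels produces a shuffle product here, not zero. Distinct \emph{ordered} edge tuples index distinct generators of $V_n$, so Ree's theorem, which is about pairing against shuffle products, gives no cancellation among them. Grouping by unordered trees fails for the same reason. You would only be trading one shuffle relation for another, and the plan also leans on the not-yet-proved cyclic symmetry. Your fallback of inducting on $n$ has nothing to grip, because the decomposition relation only involves weight-$n$ generators.

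\textbf{The missing idea} is to choose $y$ so degenerate that vanishing denominators kill every tree except the star at one vertex, and then to compute that star part exactly. By induction on the recurrence, for $1\le j\le n$ the monomials of $\mathrm{S}_n(s_0,\dots,s_n)$ whose tree is the star at $s_j$ sum to $(-1)^{n+j}s_0s_j\otimes(L_j\shuffle R_j)$. Here $L_j=s_1s_j\otimes\dotsb\otimes s_{j-1}s_j$ and $R_j=s_ns_j\otimes\dotsb\otimes s_{j+1}s_j$.

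\emph{Cyclic symmetry from this.} Take $y=u=(1^n,0)$. Only the star at $n$ survives, as a single ordered tree with sign $+1$, and homogeneity gives $\Cor(x)-\Cor(u)=\Cor(x_n,x_0,\dots,x_{n-1})$. Summing over the $n+1$ rotations of $x$ yields $(n+1)\Cor(u)=0$, hence cyclic symmetry, with no sign bookkeeping at all.

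\emph{Shuffles from this.} Use the pair $(-y,e(j))$ with $e(j)=(0^j,1,0^{n-j})$. Only the star at $j$ survives, and homogeneity plus cyclic symmetry give $\Cor(-y)-\Cor(e(j))=(-1)^{n+j}\sum_{y'}\Cor(y_0,y',y_j)$, with $y'$ ranging over $\Sh((y_1,\dots,y_{j-1}),(y_n,\dots,y_{j+1}))$. The $e(j)$ are rotations of one another, so the left side is independent of $j$. Compare $j=p$ and $j=p+1$ for $y=(x_0,x_1,\dots,x_p,x_n,\dots,x_{p+1})$. This gives exactly the $(p,q)$-shuffle relation, the two sums corresponding to whether $x_p$ or $x_n$ comes last.

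Your instinct to take $y$ with all entries equal but one is the right device. It must be used to collapse the tree sum on the right-hand side, not to evaluate a boundary term $\sum_\sigma\Cor(y_0,y_\sigma)$.
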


We prove this in \cref{s:proof}.
The proof is a symbolic manipulation of the decomposition relation.
In \cref{s:com},
we recall the universal symbols
and isolate the star terms used in the proof.

\subsection*{Acknowledgments}\label{ss:ack}

During the course of this work
I was supported by the Danish National Research Foundation
through the Copenhagen Center for Geometry and Topology (DNRF151).

\section{The universal symbols}\label{s:com}

For distinct \(s\), \(s'\in\NN\),
we write \(ss'\) for the two-element subset \(\{s,s'\}\);
we regard it as an edge
of the complete graph on~\(\NN\).
We recall the following from~\cite[Section~2.6]{KRS1}:

\begin{definition}\label{xui4oi}
  For \(n\geq1\)
  and distinct formal indices \(s_{0}\), \dots, \(s_{n}\in\NN\),
  the \emph{universal symbol} \(\SS_{n}(s_{0},\dotsc,s_{n})\)
  in the tensor algebra over \(\QQ\) on \(\binom{\NN}{2}\)
  is defined recursively by
  \(
    \SS_{1}(s_{0},s_{1})=s_{0}s_{1}
  \) and, for \(n\geq2\), by
  \begin{equation*}
    \SS_{n}(s_{0},\dotsc,s_{n})
    =
    \sum_{i=1}^{n}
      \SS_{n-1}(s_{0},\dotsc,\widehat{s_{i}},\dotsc,s_{n})
      \otimes s_{i}s_{i+1}
    -
    \sum_{i=1}^{n-1}
      \SS_{n-1}(s_{0},\dotsc,\widehat{s_{i+1}},\dotsc,s_{n})
      \otimes s_{i}s_{i+1},
  \end{equation*}
  where \(s_{n+1}=s_{0}\).
\end{definition}

Each tensor monomial in the expansion of
\(\SS_{n}(0,\dotsc,n)\)
records an ordered \(n\)-tuple of edges.
Induction on the recurrence shows that
the underlying edge set is a spanning tree
of the complete graph on \(\{0,\dotsc,n\}\),
and that no ordered monomial occurs more than once.
Consequently, we have the following:

\begin{definition}\label{xz6z6l}
  For \(n\geq1\),
  let \(\ST(n)\) be the unique subset
  of the set of ordered \(n\)-tuples of edges
  of the complete graph on \(\{0,\dotsc,n\}\)
  and
  \(\sgn\colon\ST(n)\longrightarrow\{\pm1\}\)
  the unique map such that
  \begin{equation*}
    \SS_{n}(0,\dotsc,n)
    =\sum_{T=(i_{1}j_{1},\dotsc,i_{n}j_{n})\in\ST(n)}
    \sgn(T)
    (i_{1}j_{1}\otimes\dotsb\otimes i_{n}j_{n}).
  \end{equation*}
\end{definition}

Note that the ordering is essential;
e.g.,
\(\ST(3)\) contains
\((01,03,12)\) and
\((01,12,03)\)
as distinct elements;
cf.~\cite[Section~7.3.2]{KRS1}.

For the proof of the main theorem,
we only need the terms indexed by stars.
An ordered spanning tree is a \emph{star centered at~\(j\)}
if every edge is incident with~\(j\).
When \(n=1\), the unique edge may be regarded as a star
centered at either endpoint.
The possible orderings of a star
appearing in \(\ST(n)\)
are encoded by shuffle products.
For two tensors~\(L\) and~\(R\)
in the tensor algebra,
we write \(L\shuffle R\)
for their \emph{shuffle product},
i.e., the sum of all order-preserving interleavings
of their tensor factors;
e.g.,
\begin{equation*}
  (e\otimes e')\shuffle f
  =e\otimes e'\otimes f+e\otimes f\otimes e'+f\otimes e\otimes e'.
\end{equation*}
For \(1\leq j\leq n\), consider
\begin{align*}
  L_{j}&=s_{1}s_{j}\otimes\dotsb\otimes s_{j-1}s_{j},&
  R_{j}&=s_{n}s_{j}\otimes\dotsb\otimes s_{j+1}s_{j},
\end{align*}
where an empty tensor is interpreted as the unit.
Let
\(\st_{j}\SS_{n}(s_{0},\dotsc,s_{n})\)
denote the sum of the monomials
whose underlying tree is the star centered at~\(s_{j}\).

\begin{lemma}\label{star}
  For \((n,j)\) with \(1\leq j\leq n\),
  we have
  \begin{equation*}
    \st_{j}\SS_{n}(s_{0},\dotsc,s_{n})
    =(-1)^{n+j}s_{0}s_{j}
      \otimes\bigl(L_{j}\shuffle R_{j}\bigr).
  \end{equation*}
\end{lemma}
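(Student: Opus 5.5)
We argue by induction on \(n\), using only the recurrence in \cref{xui4oi}. For \(n=1\), the only case is \(j=1\), where \(\SS_{1}(s_{0},s_{1})=s_{0}s_{1}\), \(L_{1}=R_{1}=1\), and the sign is \((-1)^{2}=1\), so the claim holds. For \(n\ge2\), we determine which summands of the recurrence can contribute monomials whose underlying tree is a star centered at \(s_{j}\). Every monomial of \(\SS_{n-1}(\dots,\widehat{s_{k}},\dots)\otimes s_{i}s_{i+1}\) is a spanning tree of the remaining \(n\) vertices together with the edge \(s_{i}s_{i+1}\). If the total is a star at \(s_{j}\), two things follow. First, \(s_{j}\) must be an endpoint of the last edge \(s_{i}s_{i+1}\). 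Second, the removed vertex \(s_{k}\) must be the other endpoint, and the remaining tree must be a star at \(s_{j}\) on the remaining vertices.

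This leaves exactly two candidate summands. Note that we need \(s_{k}\neq s_{0}\), which matches the recurrence: removals are only of \(s_{1},\dots,s_{n}\). The first candidate has last edge \(s_{j-1}s_{j}\) and removes \(s_{j-1}\). This requires \(j\ge2\), and it appears in the second sum (with \(i=j-1\)) with a minus sign. The second candidate has last edge \(s_{j}s_{j+1}\) and removes \(s_{j+1}\). This requires \(j\le n\), and it appears in the first sum with \(i=j+1\), taken cyclically. We must take care with the boundary \(j=n\): there the last edge is \(s_{n}s_{0}\) and we would have to remove \(s_{0}\), which is not allowed. For \(j=n\) we therefore need the first-sum term with \(i=n\), which removes \(s_{n}\) with edge \(s_{n}s_{0}\); but that removes the center, so it does not contribute. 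Hence for \(j=n\) only the ``left'' candidate survives, consistent with \(R_{n}=1\). I will recheck the indices here carefully. The \(+\) term with \(i=j+1\) removes \(s_{j+1}\) and appends \(s_{j+1}s_{j+2}\), which is the wrong edge. The relevant term is actually the first sum with \(i=j\), which removes \(s_{j}\); that again removes the center. So the ``right'' contribution must instead come from the second sum with \(i=j\): it removes \(s_{j+1}\), appends \(s_{j}s_{j+1}\), and carries a minus sign. Similarly, the ``left'' contribution comes from the first sum with \(i=j-1\): it removes \(s_{j-1}\), appends \(s_{j-1}s_{j}\), and carries a plus sign. Pinning down this bookkeeping exactly is the main obstacle, and I would write it out term by term.

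Applying the inductive hypothesis, we relabel the remaining vertices consecutively. After removing \(s_{j-1}\), the center becomes position \(j-1\) in a list of length \(n-1\), so the inductive sign is \((-1)^{n-1+j-1}=(-1)^{n+j}\), and the left word becomes \(L_{j}\) with its last letter \(s_{j-1}s_{j}\) deleted. After removing \(s_{j+1}\), the center stays at position \(j\), so the sign is \((-1)^{n-1+j}\); combined with the minus sign from the recurrence this again gives \((-1)^{n+j}\), and the right word becomes \(R_{j}\) with its last letter \(s_{j+1}s_{j}\) deleted. Summing the two contributions, each appended with its last edge, we obtain
\[
s_{0}s_{j}\otimes\bigl((L_{j}'\shuffle R_{j})\otimes s_{j-1}s_{j}+(L_{j}\shuffle R_{j}')\otimes s_{j}s_{j+1}\bigr),
\]
where \(L_{j}'\) and \(R_{j}'\) denote the truncated words. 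This equals \(s_{0}s_{j}\otimes(L_{j}\shuffle R_{j})\) by the standard recursion for the shuffle product according to its last letter, using that \(s_{j}s_{j+1}=s_{j+1}s_{j}\) as an unordered edge. The boundary cases \(j=1\) and \(j=n\), where one of the words is empty and only one term survives, are handled the same way. Finally, the first factor \(s_{0}s_{j}\) is untouched throughout, since \(s_{0}\) is never removed and the recursion only appends edges on the right.
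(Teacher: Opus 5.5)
Your proof is correct and follows the paper's: induction on \(n\), with the only contributing summands being the first-sum term \(i=j-1\) (sign \(+\), inductive hypothesis at \((n-1,j-1)\)) and the second-sum term \(i=j\) (sign \(-\), inductive hypothesis at \((n-1,j)\)), exactly one of which survives when \(j=1\) or \(j=n\), recombined by splitting the shuffles of \(L_{j}\) and \(R_{j}\) according to their last letter. When writing it up, delete your first, retracted identification of the summands (the second-sum term with \(i=j-1\) removes the center \(s_{j}\), and the first-sum term with \(i=j+1\) appends \(s_{j+1}s_{j+2}\)), and restore the overall factor \((-1)^{n+j}\) that is missing from your final display.
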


\begin{proof}
  We argue by induction on~\(n\).
  When \(n=1\),
  this follows from the definition.
  We suppose \(n\geq2\) and the assertion holds for \(n-1\).

  When \(j=1\),
  the only relevant summand in the recurrence for
  \(\SS_{n}(s_{0},\dotsc,s_{n})\) is
  \begin{equation*}
    -\SS_{n-1}(s_{0},s_{1},s_{3},\dotsc,s_{n})
    \otimes s_{1}s_{2}.
  \end{equation*}
  This case follows
  from the inductive hypothesis applied to
  \((n,j)=(n-1,1)\).

  When \(1<j<n\),
  the relevant summands in the recurrence for
  \(\SS_{n}(s_{0},\dotsc,s_{n})\)
  are
  \begin{equation*}
    \SS_{n-1}(s_{0},\dotsc,\widehat{s_{j-1}},\dotsc,s_{n})
    \otimes s_{j-1}s_{j}
    -\SS_{n-1}(s_{0},\dotsc,\widehat{s_{j+1}},\dotsc,s_{n})
    \otimes s_{j}s_{j+1}.
  \end{equation*}
  By the inductive hypothesis applied to \((n,j)=(n-1,j-1)\),
  the former produces the shuffles in \(L_{j}\shuffle R_{j}\)
  ending in \(s_{j-1}s_{j}\).
  By the inductive hypothesis applied to \((n,j)=(n-1,j)\),
  the latter produces precisely those
  ending in \(s_{j+1}s_{j}\).
  These two classes partition all the shuffles.
  The signs of the two classes are, respectively,
  \((-1)^{(n-1)+(j-1)}\) and
  \(-(-1)^{(n-1)+j}\),
  both equal to \((-1)^{n+j}\).

  When \(j=n\),
  the only relevant summand in the recurrence for
  \(\SS_{n}(s_{0},\dotsc,s_{n})\)
  is
  \begin{equation*}
    \SS_{n-1}(s_{0},\dotsc,s_{n-2},s_{n})
    \otimes s_{n-1}s_{n}.
  \end{equation*}
  This case follows from the inductive hypothesis applied to
  \((n,j)=(n-1,n-1)\).
\end{proof}

\section{Proof}\label{s:proof}

We fix a field~\(F\) and \(n\geq1\);
we do not induct on~\(n\).
We work in
the \(\QQ\)-vector space generated by symbols
\(C(x_{0},\dotsc,x_{n})\),
where \(x_{0}\), \dots, \(x_{n}\in F\) are not all equal,
subject to the decomposition relation.
We prove
the homogeneity,
cyclic symmetry,
and shuffle relations
in \cref{hom,cyc,shuffle},
respectively.

\begin{proposition}\label{hom}
  For \(b\in F\),
  we have \(C(x_{0}+b,\dotsc,x_{n}+b)=C(x_{0},\dotsc,x_{n})\).
\end{proposition}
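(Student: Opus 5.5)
Apply the decomposition relation twice, with the same second argument \(y=(y_{0},\dotsc,y_{n})\) each time, and compare. Homogeneity should come out immediately, because the right-hand side only involves the differences \(x_{i}-x_{j}\), and these do not change under a common translation.

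Concretely, fix \(x=(x_{0},\dotsc,x_{n})\), not all equal, and \(b\in F\). Set \(x+b=(x_{0}+b,\dotsc,x_{n}+b)\), which is again not all equal. Choose an auxiliary tuple \(y\) with pairwise distinct entries \(y_{0},\dotsc,y_{n}\). A simple choice is \(y=x\) itself when \(x\) has distinct entries. In general we want every quotient \((x_{i_{k}}-x_{j_{k}})/(y_{i_{k}}-y_{j_{k}})\) to be defined, so that no summand is omitted. Distinctness of the \(y_{i}\) is not even strictly needed, because the omission rule depends only on \(y\).

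The decomposition relation gives
\begin{equation*}
  C(x_{0},\dotsc,x_{n})-C(y_{0},\dotsc,y_{n})
  =\sum_{T\in\ST(n)}\sgn(T)\,
  C\left(0,\frac{x_{i_{1}}-x_{j_{1}}}{y_{i_{1}}-y_{j_{1}}},\dotsc,\frac{x_{i_{n}}-x_{j_{n}}}{y_{i_{n}}-y_{j_{n}}}\right),
\end{equation*}
and the analogous identity with \(x\) replaced by \(x+b\). The omitted summands are the same in both identities, since the rule refers only to \(y\). The surviving summands are termwise identical, since \((x_{i}+b)-(x_{j}+b)=x_{i}-x_{j}\). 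Subtracting the two identities yields \(C(x+b)-C(x)=0\).

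We must also check that the relation is valid for these inputs. Each symbol \(C(0,\dots)\) on the right needs its arguments not all equal, i.e.\ not all zero; otherwise it is not a generator. I expect to handle this by noting that the decomposition relation is stated in the presentation with exactly the same convention. Degenerate terms, where all quotients vanish, appear identically in both identities and cancel regardless of how they are interpreted. Alternatively, one can pick \(y\) so that no such term arises.

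There is no real obstacle here; the only point needing care is the bookkeeping of which summands are omitted. That bookkeeping is symmetric in \(x\) and \(x+b\) because it depends on \(y\) alone.
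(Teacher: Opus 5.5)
Your proof is correct and is essentially the paper's argument: both compare two instances of the decomposition relation whose right-hand sides coincide because only differences of the first tuple enter. The only difference is that the paper fixes the first argument and uses the pairs \((x,x+b)\) and \((x,x)\), so no auxiliary tuple is needed; your version works just as well with \(y=x\). The detour through a \(y\) with pairwise distinct entries, which may not exist over a small finite field, is therefore unnecessary.
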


\begin{proof}
  We write
  \(x=(x_{0},\dotsc,x_{n})\) and
  \(x+b=(x_{0}+b,\dotsc,x_{n}+b)\).
  Comparing the decomposition relations for the pairs
  \((x,x+b)\) and \((x,x)\),
  we see \(C(x)-C(x+b)=C(x)-C(x)=0\).
\end{proof}

\begin{proposition}\label{cyc}
  We have
  \(C(x_{0},x_{1},\dotsc,x_{n})=C(x_{1},\dotsc,x_{n},x_{0})\).
\end{proposition}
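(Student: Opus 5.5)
The plan is to choose the auxiliary tuple~\(y\) in the decomposition relation so degenerate that almost every summand is omitted. Then I compare the relation for a tuple with the relation for its cyclic rotation. Concretely, for \(1\leq j\leq n\) let \(e_{j}\in F^{n+1}\) be the tuple with \(1\) in position~\(j\) and \(0\) elsewhere. In the decomposition relation for the pair \((x,e_{j})\), a summand survives only if every edge \(i_{k}j_{k}\) of~\(T\) has \(y_{i_{k}}\neq y_{j_{k}}\). That is, every edge is incident with~\(j\), so only the star centered at~\(j\) contributes. On that star each ratio is \((x_{i}-x_{j})/(0-1)=x_{j}-x_{i}\). \Cref{star} then gives the closed formula
\begin{equation*}
  C(x)-C(e_{j})
  =(-1)^{n+j}\sum C\bigl(0,\,x_{j}-x_{0},\,w\bigr),
\end{equation*}
where \(w\) runs over the shuffles of \((x_{j}-x_{1},\dotsc,x_{j}-x_{j-1})\) with \((x_{j}-x_{n},\dotsc,x_{j}-x_{j+1})\). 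This holds whenever \(x\) is not constant; if some entry vanishes, the corresponding tuple may be constant, but the formula is still read termwise.

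Next I apply the same formula to the rotated tuple \(x'=(x_{1},\dotsc,x_{n},x_{0})\) with center \(j-1\), so that the center is the same entry~\(x_{j}\), and use \(e'_{j-1}\), the rotation of \(e_{j}\). Now the leading entry becomes \(x_{j}-x_{1}\), and the left block loses \(x_{j}-x_{1}\). The right block gains \(x_{j}-x_{0}\) at its front, i.e.\ it is \((x_{j}-x_{0},x_{j}-x_{n},\dotsc,x_{j}-x_{j+1})\). So \(C(x)-C(x')\) equals \(C(e_{j})-C(e'_{j-1})\), which is a constant independent of~\(x\), plus a difference of two signed shuffle sums of correlators of the shape \(C(0,\dots)\). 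The constants can be killed by applying the same identity at \(x=e_{j}\) itself, or by comparing two values of~\(j\). Alternatively, the relation for the pair \((e_{j},e'_{j-1})\) computes \(C(e_{j})-C(e'_{j-1})\) directly as a star-type sum, since \(e'_{j-1}\) separates the same vertex.

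The main obstacle is showing that the two shuffle sums agree, i.e.\ that
\begin{equation*}
  C(0,a_{0},\,a_{1}\dotsm a_{j-1}\shuffle a_{n}\dotsm a_{j+1})
  \quad\text{and}\quad
  -\,C(0,a_{1},\,a_{2}\dotsm a_{j-1}\shuffle a_{0}a_{n}\dotsm a_{j+1})
\end{equation*}
differ by something already controlled. The naive route would be to use the shuffle relations, but those are not yet available. The first thing I would try is to exploit the freedom in~\(j\). Each correlator \(C(0,b_{1},\dotsc,b_{n})\) can itself be re-expanded by the star formula with center~\(0\); this is available after reindexing by \cref{hom}, because centers at index~\(0\) are obtained by translating. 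If the two star expansions are chosen with complementary centers, I expect the resulting double sums to match term by term via the bijection of shuffles given by moving the letter \(a_{0}\) versus \(a_{1}\) to the front. If that fails, the fallback is to use a two-valued~\(y\) (for example \(0\) on an arc and \(1\) on its complement). This isolates slightly larger families of trees, and the cyclic rotation permutes those families, so cyclic invariance would be read off by comparing the surviving terms.
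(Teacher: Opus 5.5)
Your setup is the right one: a tuple that separates a single vertex \(j\) kills every ordered tree except the star at \(j\), and \cref{star} evaluates what survives. The gap is in what you do next. You work with a general center and compare the expansion for \(x\) with the expansion for its rotation. This reduces the proposition to an identity between two different signed shuffle sums, and for that identity you offer only hopes (``I expect\dots'', ``if that fails\dots''). That step carries the entire content of the statement. The natural tools for it, cyclic symmetry and the shuffle relations, are exactly what is not yet available. In the paper, star sums at different centers are compared only \emph{after} cyclic symmetry is known, and that comparison is how the shuffle relations are derived.

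Your handling of the constant has the same problem. The rotation of \(e_{j}\) is just \(e_{j-1}\), so \(C(e_{j})-C(e'_{j-1})\) is itself an instance of the claim. Moreover, the pair \((e_{j},e_{j-1})\) separates the vertex \(j-1\), not ``the same vertex''. It therefore yields a nontrivial shuffle sum of terms \(C(0,\dotsc,0,-1,0,\dotsc,0)\), not anything visibly zero.

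The missing idea is to take \(j=n\) and to recognize the right-hand side itself as a rotation of \(x\), so that no second expansion is needed. For \(j=n\) the block \(R_{n}\) is empty, so the star contributes the single monomial \(0n\otimes 1n\otimes\dotsb\otimes(n-1)n\) with sign \(+1\). Take \(u=(1,\dotsc,1,0)\) rather than \(e_{n}\), so that the denominators are \(+1\). The relation for \((x,u)\) then reads
\begin{equation*}
  C(x)-C(u)=C(0,x_{0}-x_{n},\dotsc,x_{n-1}-x_{n})=C(x_{n},x_{0},\dotsc,x_{n-1})
\end{equation*}
by homogeneity. With \(e_{n}\) you would instead get \(C(-x_{n},-x_{0},\dotsc,-x_{n-1})\), and would need a separate argument for invariance under \(x\mapsto -x\).

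Write \(\rho\) for the rotation \(x\mapsto(x_{1},\dotsc,x_{n},x_{0})\). The constant \(C(u)\) does not depend on \(x\), so summing the relation over the \(n+1\) rotations of \(x\) (all nonconstant) telescopes to \((n+1)C(u)=0\). Hence \(C(u)=0\) and \(C(x)=C(\rho^{-1}x)\), and iterating gives \(C(x)=C(\rho x)\).
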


\begin{proof}
  We write \(x=(x_{0},\dotsc,x_{n})\)
  and \(\rho(x)=(x_{1},\dotsc,x_{n},x_{0})\).
  We consider
  \(u=(1^{n},0)\in F^{n+1}\)
  and apply the decomposition relation to the pair \((x,u)\).
  The denominator corresponding
  to an edge~\(ij\)
  vanishes when \(i\), \(j<n\).
  Thus every relevant ordered tree
  has underlying tree the star centered at~\(n\).
  We can now use \cref{star} for \(j=n\),
  which says \(\st_{n}\SS_{n}(0,\dotsc,n)=0n\otimes\dotsb\otimes(n-1)n\).
  Therefore, we have
  \begin{equation}
    \label{e:4wdzl}
    C(x)-C(u)
    =C(0,x_{0}-x_{n},\dotsc,x_{n-1}-x_{n})
    =C(x_{n},x_{0},\dotsc,x_{n-1})
    =C(\rho^{-1}x),
  \end{equation}
  where we used \cref{hom}.

  Summing \cref{e:4wdzl}
  after replacing \(x\) successively by
  \(x\), \(\rho x\), \dots, \(\rho^{n}x\),
  gives \((n+1)C(u)=0\),
  and hence \(C(u)=0\).
  Therefore \(C(x)=C(\rho^{-1}x)\).
  Iterating this identity gives
  \(C(x)=C(\rho^{-n}x)=C(\rho x)\),
  which is the desired cyclic symmetry.
\end{proof}

\begin{proposition}\label{shuffle}
  For \(n=p+q\) with~\(p\), \(q>0\),
  we have
  \begin{equation*}
    \sum_{\sigma\in\Sh(p,q)}
      C(x_{0},x_{\sigma(1)},\dotsc,x_{\sigma(n)})=0.
  \end{equation*}
\end{proposition}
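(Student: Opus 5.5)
The plan is to apply the decomposition relation against a test vector that kills every tree except the star centered at a chosen index~\(j\), and then to combine the resulting identities. Fix \(1\leq j\leq n\) and let \(u^{(j)}\in F^{n+1}\) have \(u^{(j)}_{j}=0\) and all other entries equal to~\(1\). For the pair \((x,u^{(j)})\), the denominator \(u_{i}-u_{k}\) vanishes unless the edge \(ik\) is incident with~\(j\). So only the star centered at~\(j\) survives, and for an edge \(ij\) the argument becomes \((x_{i}-x_{j})/1=x_{i}-x_{j}\). \Cref{star} then gives
\begin{equation*}
  C(x)-C(u^{(j)})
  =(-1)^{n+j}\sum_{w\in A_{j}\shuffle B_{j}}
  C(0,x_{0}-x_{j},w-x_{j}),
\end{equation*}
where \(A_{j}=(x_{1},\dotsc,x_{j-1})\) and \(B_{j}=(x_{n},x_{n-1},\dotsc,x_{j+1})\), the shuffle and the subtraction being taken entrywise. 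By \cref{hom} each summand equals \(C(x_{j},x_{0},w)\), and by \cref{cyc} this is \(C(x_{0},w,x_{j})\).

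Next I remove \(C(u^{(j)})\). In the proof of \cref{cyc} we showed \(C(1^{n},0)=0\), and \(u^{(j)}\) is a cyclic rotation of \((1^{n},0)\), so \cref{cyc} gives \(C(u^{(j)})=0\). This yields the basic identity
\begin{equation*}
  \sum_{w\in A_{j}\shuffle B_{j}} C(x_{0},w,x_{j})=(-1)^{n+j}C(x).
\end{equation*}
It expresses a shuffle sum whose last entry is pinned to \(x_{j}\) as a single correlator. The only freedom is the choice of \(x\) and \(j\), that is, which entries form \(A\) and which form the reversed block~\(B\).

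To reach \cref{shuffle}, write the target as \(T(a;b)=\sum_{w\in a\shuffle b}C(y_{0},w)\) with \(a=(a_{1},\dotsc,a_{p})\) and \(b=(b_{1},\dotsc,b_{q})\). I split \(T\) according to whether the last letter of \(w\) is \(a_{p}\) or \(b_{q}\):
\begin{equation*}
  T(a;b)=\sum_{w\in a^{-}\shuffle b}C(y_{0},w,a_{p})+\sum_{w\in a\shuffle b^{-}}C(y_{0},w,b_{q}),
\end{equation*}
where \(a^{-}\) and \(b^{-}\) denote \(a\) and \(b\) with their last letters removed. Each piece has the shape of the basic identity, with \(B\) being the reversal of the remaining block. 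For the first piece take \(x=(y_{0},a_{1},\dotsc,a_{p-1},a_{p},b_{q},\dotsc,b_{1})\) and \(j=p\). For the second take the analogous vector with \(j=q\), after relabeling via \cref{cyc} and reversing the roles of the two blocks. Both pieces then become \(\pm\) a single correlator \(C(y_{0},a,\bar b)\) or \(C(y_{0},b,\bar a)\), where the bar denotes reversal.

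The main obstacle is showing that these two signed single correlators cancel. Concretely, I need a reversal identity of the form \(C(y_{0},a,\bar b)=\pm C(y_{0},b,\bar a)\) with the right sign. I would obtain it from the basic identity itself. Applying it with \(j=n\) gives cyclic symmetry back, so nothing new comes from that case. Applying it instead with \(j=1\), where \(A_{1}\) is empty and the only term is \(B_{1}\) in reversed order, gives \(C(x_{0},x_{n},\dotsc,x_{2},x_{1})=(-1)^{n+1}C(x)\), which is the reflection property. Combining reflection with \cref{cyc} should identify \(C(y_{0},b,\bar a)\) with \(\pm C(y_{0},a,\bar b)\). I would then check carefully that the parities \((-1)^{n+p}\), \((-1)^{n+q}\) and \((-1)^{n+1}\) multiply to give exact cancellation. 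If the signs do not cancel outright, the fallback is an induction on~\(p\) using the same split.
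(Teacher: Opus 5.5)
Your proof is correct. The sign check you left open does close. The \(j=1\) case of your basic identity (reflection), applied to \((y_{0},a,\bar b)\), reverses its last \(n\) entries and gives \(C(y_{0},b,\bar a)=(-1)^{n+1}C(y_{0},a,\bar b)\) directly, with no appeal to \cref{cyc}. With \(n=p+q\) this yields
\[
T(a;b)=\bigl((-1)^{n+p}+(-1)^{n+q}(-1)^{n+1}\bigr)C(y_{0},a,\bar b)=\bigl((-1)^{q}+(-1)^{q+1}\bigr)C(y_{0},a,\bar b)=0,
\]
so the fallback induction on \(p\) is not needed. For the second piece, commutativity of the shuffle product is all you use, not \cref{cyc}.

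Your route shares the paper's skeleton: a test vector that leaves only the star at \(j\), then \cref{star}, then rewriting each summand via \cref{hom,cyc}, then splitting the \((p,q)\)-shuffles by their last letter. The finish differs.

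The paper pairs \(-y\) with \(e(j)=(0^{j},1,0^{n-j})\). It only uses that \(D(j)=C(-y)-C(e(j))\) is independent of \(j\), because \(C(e(j))\) is, and it never evaluates a pinned shuffle sum. It then compares \(j=p\) with \(j=p+1\) on the single vector \((x_{0},x_{1},\dotsc,x_{p},x_{n},\dotsc,x_{p+1})\). The two pinned sums are exactly the two halves of the shuffle sum, with signs \((-1)^{n+p}\) and \((-1)^{n+p+1}\).

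You instead use the extra fact \(C(1^{n},0)=0\) from the proof of \cref{cyc}. This gives the sharper statement that each pinned sum equals \((-1)^{n+j}C(x)\), with the reflection identity \(C(x_{0},x_{n},\dotsc,x_{1})=(-1)^{n+1}C(x)\) as a byproduct.

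Your detour through the second vector \((y_{0},b,\bar a)\) with \(j=q\) is avoidable. Apply your basic identity with \(j=p+1\) to the same \(x=(y_{0},a,\bar b)\). Then \(A_{p+1}=a\), \(B_{p+1}=b^{-}\) and \(x_{p+1}=b_{q}\), so the second piece is \((-1)^{n+p+1}C(x)\) and cancels the first at once. That is exactly the paper's move.
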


\begin{proof}
  For finite tuples~\(A\) and~\(B\),
  we write \(\Sh(A,B)\)
  for the family of their order-preserving interleavings
  (counted with multiplicity).

  We take a nonconstant tuple \((y_{0},\dotsc,y_{n})\)
  and consider \(-y=(-y_{0},\dotsc,-y_{n})\).
  For \(1\leq j\leq n\),
  we consider \(e(j)=(0^{j},1,0^{n-j})\).
  We write~\(D(j)=C(-y)-C(e(j))\).
  By \cref{cyc}, \(C(e(j))\) does not depend on~\(j\),
  and hence neither does~\(D(j)\).
  We then compute~\(D(j)\)
  using the decomposition relation for the pair \((-y,e(j))\).
  As in the proof of \cref{cyc},
  the only relevant part is
  \(\st_{j}\SS_{n}(0,\dotsc,n)\).
  By \cref{star},
  \((-1)^{n+j}D(j)\) is the sum indexed by
  \((k_{1},\dotsc,k_{n-1})\in\Sh((1,\dotsc,j-1),(n,\dotsc,j+1))\).
  For an edge \(kj\), the corresponding ratio is
  \begin{equation*}
    \frac{(-y_{k})-(-y_{j})}{e(j)_{k}-e(j)_{j}}
    =y_{k}-y_{j}.
  \end{equation*}
  The corresponding summand is
  \begin{equation*}
    C(0,y_{0}-y_{j},y_{k_{1}}-y_{j},\dotsc,y_{k_{n-1}}-y_{j})
    =C(y_{j},y_{0},y_{k_{1}},\dotsc,y_{k_{n-1}})
    =C(y_{0},y_{k_{1}},\dotsc,y_{k_{n-1}},y_{j}),
  \end{equation*}
  where we used \cref{hom,cyc}.
  Therefore, we have
  \begin{equation*}
    D(j)=(-1)^{n+j}
    \sum_{y'\in
      \Sh((y_{1},\dotsc,y_{j-1}),(y_{n},\dotsc,y_{j+1}))}
      C(y_{0},y',y_{j}).
  \end{equation*}

  We now return to \((x_{0},\dotsc,x_{n})\). We consider
  \begin{align*}
    A&=(x_{1},\dotsc,x_{p}),&
    A^{-}&=(x_{1},\dotsc,x_{p-1}),&
    B&=(x_{p+1},\dotsc,x_{n}),&
    B^{-}&=(x_{p+1},\dotsc,x_{n-1}).
  \end{align*}
  By applying the previous discussion to
  \begin{equation*}
    (y_{0},y_{1},\dotsc,y_{n})
    =(x_{0},x_{1},\dotsc,x_{p},x_{n},x_{n-1},\dotsc,x_{p+1}),
  \end{equation*}
  we have
  \begin{align*}
    D(p)&=(-1)^{n+p}
    \sum_{x'\in\Sh(A^{-},B)}C(x_{0},x',x_{p}),&
    D(p+1)&=(-1)^{n+p+1}
    \sum_{x'\in\Sh(A,B^{-})}C(x_{0},x',x_{n}).
  \end{align*}
  Since \(D(j)\) is independent of~\(j\),
  we have
  \begin{equation*}
    \sum_{x'\in\Sh(A^{-},B)}C(x_{0},x',x_{p})
    +\sum_{x'\in\Sh(A,B^{-})}C(x_{0},x',x_{n})=0.
  \end{equation*}
  Every \((p,q)\)-shuffle places
  either
  the final entry~\(x_{p}\) of~\(A\)
  or
  the final entry~\(x_{n}\) of~\(B\)
  last.
  Removing that last entry gives, respectively,
  a shuffle of~\(A^{-}\) with~\(B\)
  or of~\(A\) with~\(B^{-}\).
  Thus the two sums
  partition the \((p,q)\)-shuffles,
  proving the result.
\end{proof}

\bibliographystyle{plain}
\let\SS\oldSS  \newcommand{\yyyy}[1]{}

\end{document}